\setheadfoot{\onelineskip}{2\onelineskip} 
\DeclareMathOperator{\id}{id}
\DeclareMathOperator{\Hom}{Hom}
\DeclareMathOperator{\Mor}{Mor}
\DeclareMathOperator{\Ob}{Ob}
\newtheorem{theorem}{Theorem}[chapter]
\newtheorem{proposition}[theorem]{Proposition}
\theoremstyle{remark}
\newtheorem{remark}[theorem]{Remark}
\theoremstyle{definition}
\newtheorem{definition}[theorem]{Definition}
\newtheorem*{notation*}{Notation}
\theoremstyle{definition}
\newtheorem{example}[theorem]{Example}
\newcommand{\cat}[1]{\mathscr{#1}}
\newcommand{\Cat}[1]{\mathsf{#1}}
\newcommand{\fun}[1]{\mathcal{#1}}
\def\too{\longrightarrow}
\def\Set{\Cat{Set}}
\def\FinSet{\Cat{FinSet}}
\newcommand{\To}[1]{\xrightarrow{#1}}
\def\iso{\cong}
\def\ss{\subseteq}
\def\TFS{\Cat{TFS}}
\def\dom{\tn{dom}}
\def\cod{\tn{cod}}
\def\op{^{\text{op}}}
\def\ZZ{\mathbb Z}
\newcommand{\ol}[1]{\overline{#1}}
\newcommand{\inp}[1]{#1^{\tn{in}}}
\newcommand{\outp}[1]{#1^{\tn{out}}}
\newcommand{\vinp}[1]{\ol{\inp{#1}}}
\newcommand{\voutp}[1]{\ol{\outp{#1}}}
\def\tn{\textnormal}
\def\|{\;|\;}
\def\m1{{-1}}
\def\singleton{{\{\ast\}}}
\tikzset{
	wiring diagram/.style={
		every to/.style={out=0,in=180,draw},
		label/.style={
			font=\everymath\expandafter{\the\everymath\scriptstyle},
			inner sep=0pt,
			node distance=2pt and -2pt},
		semithick,
		node distance=1 and 1,
		decoration={markings, mark=at position .5 with {\arrow{stealth};}},
		ar/.style={postaction={decorate}},
		execute at begin picture={\tikzset{
			x=\bbx, y=\bby,
			every fit/.style={inner xsep=\bbx, inner ysep=\bby}}}
		},
	bbx/.store in=\bbx,
	bbx = 1.5cm,
	bby/.store in=\bby,
	bby = 1.75ex,
	bb port sep/.store in=\bbportsep,
	bb port sep=2,
	bb port length/.store in=\bbportlen,
	bb port length=4pt,
	bb min width/.store in=\bbminwidth,
	bb min width=1cm,
	bb rounded corners/.store in=\bbcorners,
	bb rounded corners=2pt,
	bb small/.style={bb port sep=1, bb port length=2.5pt, bbx=.4cm, bb min width=.4cm, bby=.7ex},
	bbthick/.code n args={4}{
		\pgfmathsetlengthmacro{\bbheight}{\bbportsep * (max(#1,#2)+1) * \bby}
		\pgfkeysalso{draw,minimum height=\bbheight,minimum width=\bbminwidth,outer sep=0pt,
			rounded corners=\bbcorners,thick,
			prefix after command={\pgfextra{\let\fixname\tikzlastnode}},
			append after command={\pgfextra{
			\draw[#3]
				\ifnum #1=0{} \else foreach \i in {1,...,#1} {
					($(\fixname.north west)!{\i/(#1+1)}!(\fixname.south west)$) +(-\bbportlen,0) coordinate (\fixname_in\i) -- +(\bbportlen,0) coordinate (\fixname_in\i')}\fi;
			\draw[#4]
				\ifnum #2=0{} \else foreach \i in {1,...,#2} {
					($(\fixname.north east)!{\i/(#2+1)}!(\fixname.south east)$) +(-\bbportlen,0) coordinate (\fixname_out\i') -- +(\bbportlen,0) coordinate (\fixname_out\i)}\fi;
			}}}
	},
	bb/.code 2 args={\pgfkeysalso{bbthick={#1}{#2}{thin}{thin}}},
	bb name/.style={append after command={\pgfextra{\node[anchor=north] at (\fixname.north) {#1};}}}
}
\date{\vspace{-.3in}}
\begin{document}
\tightlists
\firmlists

\title{Nesting of dynamical systems and mode-dependent networks}

\author {David I. Spivak\thanks{This project was supported by ONR grant N000141310260, AFOSR grant FA9550-14-1-0031, and NASA grant NNH13ZEA001N-SSAT}
\\\texttt{\small dspivak@math.mit.edu}
\and
Joshua Tan
\\\texttt{\small joshua.z.tan@gmail.com}
}

\maketitle

\begin{abstract}
For many networks, the connection pattern (often called the topology) can vary in time, depending on the changing state of the modules within the network. This paper addresses the issue of nesting such mode-dependent networks, in which a local network can be abstracted as a single module in a larger network. Each module in the network represents a dynamical system, whose behavior includes repeatedly updating its communicative mode, and these mode in turn dictate the connection pattern. It is in this way that the dynamics of the modules controls the topology of the networks at all levels. This paper provides a formal semantics, using the category-theoretic framework of operads and their algebras, to capture the nesting property and dynamics of mode-dependent networks. We provide a detailed running example to ground the mathematics.
\end{abstract}


\chapter{Introduction}
\label{sec:intro}


If we find that a story or structure repeats itself at various scales of a model, it is often a useful exercise to formalize the model using \emph{operads}, because doing so constrains the model to be highly self-consistent. There is an operad of complete sentences: ``Here is a sentence, here is another sentence, and this entire quotation is a sentence.'' (There is (or may (also) be) an operad of parentheticals.) More importantly, there is an operad of networks, one that naturally models networks of networks.

To say that there is an operad of $X$ is to say that there is a hierarchical, modular theory of $X$. In previous work \cite{RS, S, VSL}, operads that describe special sorts of networks, called \emph{wiring diagrams}, were given and shown to model databases relations, digital circuits, and open continuous-time dynamical systems, among other examples. A wiring diagram is a fixed, graph-like arrangement of nodes and directed edges where (1) nodes represent boxes $\begin{tikzpicture}[wiring diagram,bb small]\node[bb={1}{1}]{};\end{tikzpicture}$ with differentiated input and output ports and (2) one can ``cut-and-paste'' graphs into or out of other graphs by expanding or contracting at a given node. In this note, we extend wiring diagrams to the case where the network topology is not fixed in time but instead varies with respect to the states of various nodes in the wiring diagram. The states of the nodes define a local ``mode'', so we have dubbed these diagrams \emph{mode-dependent networks}.

\paragraph{Motivating example.} Many examples of dynamical processes on complex networks come from biology and living systems, and in such systems the network topology often changes over time as the state evolves; changes in topology in turn influence the state. One classic example is a (human) neural network, where connections are constantly formed or pruned as neurons fire. We present here a network model of the visual system that composes seamlessly from the firing dynamics of individual neurons up to the large-scale dynamics of brain models that simulate blinking, light adaptation, and visual processing.\footnote{There is an ulterior motive for this example; we believe that operads are useful not only in the analysis of complex networks but also in the \emph{design} of complex (scientific) models, such as for large-scale brain models. This example is a demonstration of both uses.}

\begin{example}Consider the diagram below.

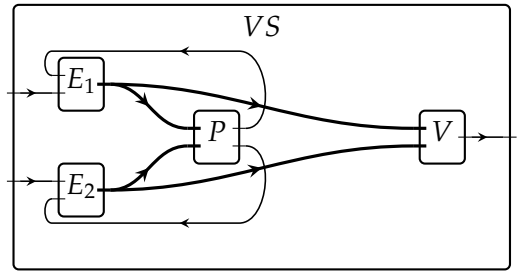
\begin{figure}[h!]
\centering
\[
	\begin{tikzpicture}[wiring diagram,bb port sep=1, bb port length=2.5pt, bbx=.6cm, bb min width=.6cm, bby=1.3ex]
		\node[bbthick={2}{1}{thin}{very thick}, bb name=${E_1}$] (E1) {};
		\node[bbthick={2}{1}{thin}{very thick}, below= 3 of E1, bb name=${E_2}$] (E2) {};
		\node[bbthick={2}{2}{very thick}{thin}, below right = 0 and 2 of E1, bb name=${P}$] (P) {};
		\node[bbthick={2}{1}{very thick}{thin}, right= 4 of P, bb name=$V$] (V) {};
		\node[bb={2}{1}, fit={($(E2.south west)+(0,-2)$) (P) (V) ($(E1.north)+(0,2)$)},bb name =$VS$] (VS) {};
		\draw[ar, very thick] (E1_out1) to (P_in1);
		\draw[ar, very thick] (E2_out1) to (P_in2);
		\draw[ar] let \p1=(P.north east), \p2=(E1.north west), \n1={\y1+\bby}, \n2=\bbportlen in (P_out1) to[in=0] (\x1,\y2+\n2) -- (\x2-\n2,\y2+\n2) to[out=180] (E1_in1);
		\draw[ar] let \p1=(P.south east), \p2=(E2.south west), \n1={\y1+\bby}, \n2=\bbportlen in (P_out2) to[in=0] (\x1,\y2-\n2) -- (\x2-\n2,\y2-\n2) to[out=180] (E2_in2);
		\draw[ar] (VS_in1') to (E1_in2);
		\draw[ar] (VS_in2') to (E2_in1);
		\draw[ar, very thick] (E1_out1) to (V_in1);
		\draw[ar, very thick] (E2_out1) to (V_in2);
		\draw[ar] (V_out1) to (VS_out1');
	\end{tikzpicture}
\]
\caption{A discrete dynamical system of the visual pathway, interpreted over a mode-dependent network in $\cat{O}_\Cat{MDN}$, the operad of mode-dependent networks. The thicker wires represent many parallel connections.}
\label{fig:firsteye}
\end{figure}

The visual pathway from retina to cortex is a complex neurological system that combines autonomic behaviors (blinking, saccading) with layers of image processing in the visual cortex. In this diagram, $E_1$ and $E_2$ represent the eyes and eyelids, $P$ represents a collection of nuclei in the brainstem that regulate the corneal (blink) reflex, and $V$ represents (a slice of) the visual cortex. $VS$ represents the entire visual system. This setup can be described using an operad because the outer box is of the same nature as the inner boxes---i.e., the visual pathway is itself a neural network which may be placed within a larger neural network---and the process of assembly can repeat ad infinitum. In this way, one can recursively build up networks of networks. The goal of this paper is to show how operads can model not only networks of networks but also complex, heterogeneous dynamics over such networks. 

The mode-dependency of Figure~\ref{fig:firsteye} may not be obvious at first glance. We will see it in later examples. This follows a common motif in networked systems; changes in connection pattern are not typically visible in a macro analysis of networks of networks, yet small changes in connectivity can drastically change the dynamics of the system as a whole, for example in power grids \cite{Buld}, the Internet \cite{Albert}, and functional brain networks \cite{Reis}. To see the structure, one must zoom in.
\end{example}

Similar stories of modularity and hierarchy have been considered before, including networks of networks \cite{Reis}, multilayer networks \cite{Kiv, Buld}, and tensor representations thereof. Operads can be used to enrich these graph-theoretic models; operads provide a \emph{formula} for zooming in and out of a complex network, and the proofs in this paper show that the formula for rewiring and assembly is consistent and independent of one's choice of how and where to zoom. This requirement puts fairly strict constraints on the formalism, as mentioned above. Not any seemingly-workable definition of mode-dependent dynamical systems will actually satisfy the nesting property.


All this will be made formal below. We refer the reader to Mac Lane \cite{Mac}, Awodey \cite{Awo}, or Spivak \cite{CT4S} for background on category theory (in decreasing order of difficulty), to \cite{Lei} for specific background on operads, algebras, and monoidal categories, and to \cite{field}, \cite{Newman}, or \cite{Kiv} for background on complex networks and multilayer networks. Other category-theoretic approaches to networks and their dynamics include \cite{DL1, DL2}, \cite{BE, BF}, and \cite{Sco}.

In Section~\ref{sec:background} we will cover the prerequisites in category theory, before giving the precise definition of mode-dependent networks in Section~\ref{sec:MDN} and of modal dynamical systems in Section~\ref{sec:DS_MDN}. Readers interested mostly in applications may skip directly to Section~\ref{sec:applications}, though we recommend at least skimming some of the examples in Section~\ref{sec:DS_MDN}.


\chapter{Background}\label{sec:background}
We begin with some notation and basic terms from category theory.

\begin{notation*}


Let $\Set$ denote the category of sets and functions between them. Let $\Cat{FinSet}\ss\Set$ denote the full subcategory spanned by the finite sets. If $A,B\in\Ob\cat{C}$ are objects of a category, we may denote the set of morphisms between them either by $\Hom_{\cat{C}}(A,B)$ or by $\cat{C}(A,B)$. If $A\in\Ob\cat{C}$ is an object, we may denote the identity morphism on $A$ either by $\id_A$ or simply by $A$. If there is a unique element in $\cat{C}(A,B)$, we may denote it $!\colon A\to B$. For example, there is a unique function $!\colon\emptyset\to A$ for any set $A$. The symbol $\emptyset$ represents the empty set. For any category, let $\Mor \cat{C}$ be the union of all the hom-sets, $\Mor \cat{C} = \sqcup_{A, B \in \cat{C}} \Hom(A,B)$. There are maps $\dom, \cod: \Mor \cat{C} \to \Ob \cat{C}$ sending a morphism $f$ to its domain and codomain, respectively.

\end{notation*}

Recall that wiring diagrams are made up of boxes with a notion of input and output port, each labeled by the type of data that goes through that port. The following definitions are used to make this precise.

\begin{definition}\label{def:typed finite sets} 
The category of \emph{typed finite sets}, denoted $\TFS$, is defined as follows. An object in $\TFS$ is a finite set of sets, 
\[\Ob\TFS\coloneqq\{(A,\tau)\; |\; A\in\Ob\FinSet,\; \tau\colon A\to\Set)\}.\] 
 We call $\tau$ the \emph{typing function}, and for any element $a\in A$, we call the set $\tau(a)$ its {\em type}. If the typing function $\tau$ is clear from context, we may abuse notation and denote $(A,\tau)$ simply by $A$.

A morphism $q\colon(A,\tau)\to (A',\tau')$ in $\TFS$ consists of a function $q\colon A\to A'$ that makes the following diagram of finite sets commute:
\[
\begin{tikzcd}[column sep=small]
A \ar[rr,"q"] \ar[rd,"\tau"']
& {}
& A' \ar[ld,"\tau'"]\\
&\Set
\end{tikzcd}
\]
We refer to the morphisms of $\TFS$ as {\em typed functions}. The category $\TFS$ has a monoidal structure $(A,\tau)\sqcup(A',\tau')$, given by disjoint union of underlying sets and the induced function $A\sqcup A'\to\Set$.
\end{definition}

\begin{definition}Given a finite set $A$ and a function $\tau: A \to \TFS$, we denote by $\ol{(A,\tau)} = \ol A$, the cartesian product 
$$\ol{A}\coloneqq\prod_{a\in A}\tau(a).$$
We call the set $\ol{A}$ the \emph{dependent product} of $A$.\end{definition} Taking dependent products is a functor $\TFS\op\to\Set$, i.e., a morphism $q\colon A\to A'$ induces a function $\ol{q}\colon\ol{A'}\to\ol{A}$. In fact this functor is \emph{strong} in the sense that if $A$ and $B$ are finite sets, then there is an isomorphism,
\begin{align}\label{dia:overline is strong}
\ol{A\sqcup B}\iso\ol{A}\times\ol{B}.
\end{align}

\begin{example}\label{ex:TFS}

We give three examples of typed finite sets and their dependent products. 
\begin{enumerate}
\item If $A=\{1,2,3\}$ and $\tau\colon A\to\TFS$ is given by $\tau(1)=\tau(2)=\tau(3)=\ZZ$, then the dependent product is $\ol{(A,\tau)}\iso\ZZ\times\ZZ\times\ZZ$.
\item Let $\singleton$ be an arbitrary one-element set. Consider the typed finite set $\singleton\To{\tau}\TFS$, sending $\ast$ to the set $\tau(\ast)\iso\{{\texttt a,b,\ldots,z}\}$. Then the dependent product is simply $\ol{(\singleton,\tau)}=\{{\texttt a,b,\ldots,z}\}$.
\item Consider the unique function $!\colon\emptyset\to\TFS$. Its dependent product is $\ol{\emptyset}\iso\singleton$, because the empty product is the singleton set. 
\end{enumerate}
\end{example}


\begin{remark}

In this paper, we choose to speak in terms of operads rather than monoidal categories since the overarching idea of modularity---one thing built from many, many small boxes wired into one large box---is operadic. That said, the formal mathematics will generally be written using the framework of symmetric monoidal categories since doing so avoids the use of subscripts in our notation. We are implicitly referring to a functor 
$$\cat{O}\colon\Cat{SMC}\to\Cat{Oprd}$$ 
from the category of symmetric monoidal categories and lax functors to the category of operads---by which we mean symmetric colored operads---and operad functors. If $(\cat{C},\otimes)$ is a symmetric monoidal category then the operad $\cat{O}_{\cat{C}}$ has the same objects as $\cat{C}$, and a morphism $(X_1,\ldots,X_n)\to Y$ in $\cat{O}_{\cat{C}}$ is defined as a morphism $X_1\otimes\cdots\otimes X_n\to Y$ in $\cat{C}$. See \cite[Example 2.1.3]{Lei}.



Note that monoidal categories are used in \cite{BE} and \cite{BF} to study networks, and some confusion may arise in comparing their work to our own, unless care is taken. See \cite{SSR}. One way to see the difference is that we are focusing on the fact that networks nest, i.e., that multiple dynamical systems can be gathered into a network that is itself a single dynamical system. In this way, our work more closely follows the intention of \cite{Bro} or \cite{Har}. However, neither of these references uses category theory, though the latter mentions it as a plausible approach. 


\end{remark}

\chapter{Mode-dependent networks}\label{sec:MDN}

\begin{definition}\label{def:WD}

A \emph{box} is a pair $X=(\inp{X},\outp{X})$, where $\inp{X},\outp{X}\in\Ob\TFS$ are typed finite sets. Each element of $\inp{X}\sqcup\outp{X}$ will be called a \emph{port}.

If $X$ and $Y$ are boxes, we define a \emph{wiring diagram}, denoted $\varphi\colon X\to Y$, to be a pair of typed functions $\varphi=(\inp{\varphi},\outp{\varphi})$:
\begin{align}\label{dia:wd}
\inp{\varphi}&\colon\inp{X}\too\inp{Y}\sqcup\outp{X}
\\\nonumber
\outp{\varphi}&\colon\outp{Y}\too\outp{X}
\end{align}

We define the composition formula for wiring diagrams $\varphi\colon X\to Y$ and $\psi\colon Y\to Z$ as the dotted arrows below, the indicated compositions in $\Cat{TFS}$: 
\begin{equation}\label{eqn:comp law WD}
\begin{tikzcd}[row sep=30pt,column sep=35 pt]
\inp{X}\ar[d,"\inp{\varphi}"']\ar[r,dashed,"\inp{(\psi\circ\varphi)}"]&\inp{Z}\sqcup\outp{X}\\
\inp{Y}\sqcup\outp{X}\ar[r,"\inp{\psi}\sqcup\outp{X}"']&\inp{Z}\sqcup\outp{Y}\sqcup\outp{X}\ar[u,"\inp{Z}\sqcup\outp{\varphi}\sqcup\outp{X}"']
\end{tikzcd}
\qquad
\begin{tikzcd}[row sep=30pt,column sep=15 pt]
\outp{Z}\ar[rr,dashed,"\outp{(\psi\circ\varphi)}"]\ar[dr,"\outp{\psi}"']&&\outp{X}\\
&\outp{Y}\ar[ur,"\outp{\varphi}"']
\end{tikzcd}
\end{equation}
This defines a category, which we call the category of \emph{wiring diagrams}, and denote $\Cat{WD}$. It has boxes as objects and wiring diagrams as morphisms. It has a symmetric monoidal structure defined by disjoint union, which we denote by $\sqcup\colon\Cat{WD}\times\Cat{WD}\to\Cat{WD}$. We denote the operad underlying $\Cat{WD}$ by $\cat{O}_{\Cat{WD}}$.

\end{definition}

\begin{remark}By convention, we label the input ports of a box $P$ by $\inp{P}_a, \inp{P}_b, ...$ and the output ports by $\outp{P}_a, \outp{P}_b,...$. We also often suppress the typing $\tau$. For a zero-input, two-output box $P=\big((\emptyset,!),(\{\outp{P}_a, \outp{P}_b\},\tau)\big)$, where $\tau(\outp{P}_a)= \tau(\outp{P}_b) = L$, we would write $\inp{P}=\emptyset$ and $\outp{P}=\{\outp{P}_a, \outp{P}_a\}$, and we might draw it in any of the following ways:
$$
	\tikz[wiring diagram,bb port sep=1,bby=8pt,bb min width=13pt,bb port length=2pt,bb rounded corners=1pt,baseline=(B.south)]{
		\node[bb={0}{2},bb name=$P$] (B) {};
		\node[bb={0}{2},right =2cm of B, bb name=$P$] (C) {};
		\node[bb={0}{2},right =2cm of C, bb name=$P$] (D) {};
		\draw[label] node[above right=1pt and 1.2pt of B_out1]     {$\outp{P}_a : L$};
		\draw[label] node[below right=1pt and 1.2pt of B_out2]     {$\outp{P}_b : L$};
		\draw[label] node[above right=1pt and 1.2pt of C_out1]     {$\outp{P}_a$};
		\draw[label] node[below right=1pt and 1.2pt of C_out2]     {$\outp{P}_b$};
	}
$$

\end{remark}

\begin{example}Consider a wiring diagram with three boxes $A_1,A_2,A_3$ wired into a bigger box, $B$. We can define a morphism $\varphi \colon (A_1,A_2,A_3) \too R$ in $\cat{O}_{\Cat{WD}}$ by specifying equations such as $\inp{\varphi}(\inp{A_{1a}}) = \outp{A}_{3a}$, $\outp{\varphi}(\outp{B}_a) = \outp{A_{2a}}$, and so on. (This is equivalent to a morphism $\varphi : X \to B$ in $\Cat{WD}$ where we use the symmetric monoidal product to define $X = A_1 \sqcup A_2 \sqcup A_3$.) Specifying a diagram is equivalent to specifying the two functions $\inp{\varphi}$ and $\outp{\varphi}$.
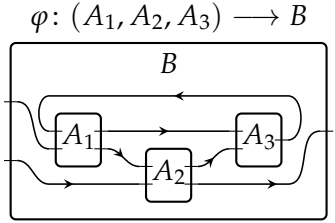
\begin{figure}[h!]
\centering
\[
	\begin{tikzpicture}[wiring diagram,bb port sep=1, bb port length=2.5pt, bbx=.6cm, bb min width=.6cm, bby=1.3ex]
		\node[bb={2}{2}, bb name=${A_1}$] (A1) {};
		\node[bb={2}{2}, below right= -1 and 1 of A1, bb name=${A_2}$] (A2) {};
		\node[bb={2}{1}, above right = -1 and 1 of A2, bb name=${A_3}$] (A3) {};
		\node[bb={2}{1}, fit={(A1) (A2) (A3) ($(A1.north)+(0,3)$)},bb name =${B}$] (B) {};
		\draw(B_in1') to (A1_in2);
		\draw[ar] (A1_out1) to (A3_in1);
		\draw[ar] (A1_out2) to (A2_in1);
		\draw[ar] (A2_out1) to (A3_in2);
		\draw[ar] let \p1=(A2.south west), \p2=(A1.south west), \n1={\y1+\bby}, \n2=\bbportlen in
	    (B_in2') to (\x2-\n2,\n1) -- (\x2+\n2,\n1) to (A2_in2);
		\draw[ar] let \p1=(A2.south east), \p2=(A3.south east), \n1={\y1+\bby}, \n2=\bbportlen in
	    (A2_out2) to (\x2-\n2,\n1) -- (\x2+\n2,\n1) to (B_out1');
	    	\draw[ar] let \p1=(A3.north east), \p2=(A1.north west), \n1={\y1+\bby}, \n2=\bbportlen in
	    (A3_out1) to[in=0] (\x1+\n2,\n1) -- (\x2-\n2,\n1) to[out=180] (A1_in1);
		\node[anchor=south] at (B.north) {$\varphi\colon (A_1,A_2,A_3)\too B$};
	\end{tikzpicture}
\]
\caption{A wiring diagram.}
\label{fig:wdexample}
\end{figure}
Note that the connections in a wiring diagram are all $n$-to-one for some $n$, e.g. wires may naturally ``split'' but may not combine unless through a box. Every output node maps to at most one input node, but there may be many output nodes for one input node. See also \cite[Definition 3.5]{VSL}.
\end{example}

By now we can define static networks, e.g. pictures. To define networks that can morph and change, we give the following definition:


\begin{definition}\label{def:MDN}

We define a symmetric monoidal category, called the category of \emph{mode-dependent networks} and denoted $\Cat{MDN}$, as follows. An object in $\Cat{MDN}$, called a \emph{modal box}, is a pair $(M,X)$, where $M\in\Set$ is a set and $X\colon M\to\Ob\Cat{WD}$ is function, sending each mode to a box (see Definition~\ref{def:WD}). We call $M$ the \emph{set of communicative modes}, or just \emph{mode set} for short, and we call $X$ the \emph{interface function}. If $X$ factors through a one-element set $\singleton$, we call $(M,X)$ a \emph{mode-independent box}. 

We define  $\Hom_{\Cat{MDN}}((M,X),(N,Y))$ to be the set of pairs $(\epsilon,\sigma)$, where $\epsilon : M \to \Mor \Cat{WD}$ and $\sigma : M \to N$ are functions making the following diagram commute:
\begin{equation}\label{eqn:MDN}
	\begin{tikzcd}
		M\ar[d,"X"']\ar[r,equal]&M\ar[d,"\epsilon"]\ar[r,"\sigma"]&N\ar[d,"Y"]\\
		\Ob\Cat{WD}&\Mor\Cat{WD}\ar[r,"\cod"']\ar[l,"\dom"]&\Ob\Cat{WD}
	\end{tikzcd}
\end{equation}
We call a morphism $(\epsilon,\sigma)\in\Hom_{\Cat{MDN}}((M,X),(N,Y))$ a \emph{mode-dependent network}, and we call $\epsilon$ the \emph{event map}, following \cite{field}. We can also write
$$
\epsilon\in\prod_{m\in M}\Cat{WD}(X(m),Y(m)).
$$
In the special case that $X$, $Y$, and $\epsilon$ factor through the one-element set $\singleton$, i.e., if neither the shape of the boxes nor the wiring diagram change with the mode, then we say the network is \emph{mode-independent}.

Given two composable morphisms (mode-dependent networks) 
$$
	\begin{tikzcd}[column sep = 1.75em]
			M_0\ar[d,"X_0"']\ar[r,equal]&M_0\ar[d,"\epsilon_0"]\ar[r,"\sigma_0"]&M_1\ar[d,"X_1"]\ar[r,equal]&M_1\ar[d,"\epsilon_1"]\ar[r,"\sigma_1"]&M_2\ar[d,"X_2"]\\
			\Ob\Cat{WD}&\Mor\Cat{WD}\ar[r,"\cod"']\ar[l,"\dom"]&\Ob\Cat{WD}&\Mor\Cat{WD}\ar[l,"\dom"]\ar[r,"\cod"']&\Ob\Cat{WD}
	\end{tikzcd}
$$
the composition formula is given by $(\epsilon_1,\sigma_1)\circ_{\Cat{MDN}}(\epsilon_0,\sigma_0)\coloneqq(\epsilon,\sigma)$, where $\sigma\coloneqq\sigma_1\circ\sigma_0$ is the composition of functions, and where $\epsilon\colon M_0\to\Mor\Cat{WD}$ is given by composition in $\Cat{WD}$: 
\begin{align}\label{dia:comp of events}
\epsilon(m_0)\coloneqq \epsilon_1\sigma_0(m_0)\circ \epsilon_0(m_0).
\end{align}

The monoidal structure on $\Cat{MDN}$ is given on objects by 
\begin{equation}\label{eq:monoidal MDN}
(M,X)\otimes (N,Y)\coloneqq(M\times N,X\sqcup Y),
\end{equation}
and similarly on morphisms. Here, $X\sqcup Y$ is shorthand for the composite
$$M\times N\To{X\times Y}\Ob\Cat{WD}\times\Ob\Cat{WD}\To{\sqcup}\Ob\Cat{WD}.$$ 
The unit object in $\Cat{MDN}$ is $(\singleton,\emptyset)$, where $\singleton$ is the singleton set, and $\emptyset$ is the monoidal unit of $\Cat{WD}$.

\end{definition}

\begin{remark}

In \cite{field}, attention is paid to the image of the event map $\epsilon\colon M\to\Cat{WD}(X,Y)$, which is denoted $\cat{A}\ss\Cat{WD}(X,Y)$. At certain points in that discussion, $\cat{A}$ is chosen independently of $\epsilon$, but at others it is assumed to be the image of $\epsilon$, see \cite[Remark 4.10(2)]{field}. In Definition~\ref{def:MDN}, we could have defined a morphism in $\Cat{MDN}$ to consist of a triple $(\cat{A},\epsilon,\sigma)$, where $\cat{A}\ss\Cat{WD}(X,Y)$ and $\epsilon\colon M\to\cat{A}$. In this case, composition would also involve composing the various $\cat{A}$'s, but this is straightforward. However, specifying $\cat{A}$ independently seemed superfluous here, especially given Field's remark.

\end{remark}

\begin{example}[Building a basic model]\label{ex:MDN} 
The human retina is made up of light-sensitive nerve cells which convert any image projected onto the retinal detectors (rods and cones) into neural signals. Like almost all neurons, retinal nerve cells have three physiological modes: polarized (the neuron is inactive), depolarized (the neuron is active and firing), and hyperpolarized (the neuron has just fired and is not receiving input). In the retina the situation is somewhat more complicated; there is an extra mechanism that tracks the cell's adaptation to background luminance \cite{Jarsky}. This is modeled by the two wiring diagrams, $\phi$ and $\psi$, in Figure~\ref{fig:retinal nerve}. 


\begin{figure}[h!]
\centering
\[
	\begin{tikzpicture}[wiring diagram,bb port sep=1, bb port length=2.5pt, bbx=.6cm, bb min width=.6cm, bby=1.5ex]
		\node[bb={1}{1}] (N) {$N$};
		\node[bb={1}{1}, fit={(N) ($(N.south)+(3,0)$)}, bb name=$R$] (box) {};
		\draw[ar] (box_in1') to (N_in1);
		\draw[ar] (N_out1) to (box_out1');
		\node[bb={0}{1}, right= 10 of N, bb name=$N$] (N2) {};
		\node[bb={1}{1}, fit={(N2) ($(N2.south)+(3,0)$)}, bb name=$R$] (box2) {};
		\draw[ar] (N2_out1) to (box2_out1');
		\draw[label] 
		    node[below = 3pt of box.south]     {$m_N = \text{polarized}$};
		\draw[label]
		    node[below = 3pt of box2.south]     {$m_N = \text{depolarized, hyperpolarized}$};
		\draw[label]
		    node[above = 3pt of box.north]     {$\varphi : N(m_N) \to N(\ast)$};
		\draw[label]
		    node[above = 3pt of box2.north]     {$\psi : N(m_N) \to N(\ast)$};
		\draw[label] node[left=1.2pt of box_in1]     {$\inp{N}_a : [0,1]$};
		\draw[label] node[left=1.2pt of box2_in1]     {$\inp{N}_a : [0,1]$};
		\draw[label] node[right=1.2pt of box_out1]     {$\outp{N}_a : \{0,1\}$};
		\draw[label] node[right=1.2pt of box2_out1]     {$\outp{N}_a : \{0,1\}$};
	\end{tikzpicture}
\]
\caption{Three modes of a retinal nerve, represented by the wiring diagrams $\varphi, \psi$.}
\label{fig:retinal nerve}
\end{figure}
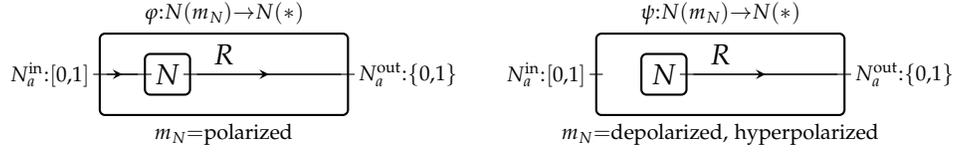

The modal box $(M_N, N)$ represents the nucleus of a retinal nerve cell which takes a value in $[0,1]$ as input and outputs a spike in $\{0,1\}$. The outer box $(M_R, R)$ is a simply a container for $(M_N,N)$. The different modes $M_N = \{ \text{polarized, depolarized, hyperpolarized} \}$ each specify a (normal) box in $\Ob \Cat{WD}$:
\begin{align*}
N(\text{depolarized}) &= \begin{tikzpicture}[wiring diagram,bb small]\node[bb={1}{1}]{};\end{tikzpicture} = (\inp{N}, \outp{N}) = ( \{\inp{N}_a, \inp{N}_b \}, \{ \outp{N}_a, \outp{N}_b \} ) \\
N(\text{hyperpolarized}) &= \begin{tikzpicture}[wiring diagram,bb small]\node[bb={1}{1}]{};\end{tikzpicture} = (\inp{N}, \outp{N}) = ( \{\inp{N}_a, \inp{N}_b \}, \{ \outp{N}_a, \outp{N}_b \} ) \\ 
N(\text{polarized}) &= \begin{tikzpicture}[wiring diagram,bb small]\node[bb={0}{1}]{};\end{tikzpicture} = (\inp N, \outp N) = ( \{ \inp N_b \}, \{ \outp N_a, \outp N_b \} ) \\
\end{align*}
The box $(M_R,R) = R = \begin{tikzpicture}[wiring diagram,bb small]\node[bb={1}{1}]{};\end{tikzpicture}$ is mode-independent, so we represent its mode set by $M_R = \{\ast\}$. The entire mode-dependent network in Figure~\ref{fig:retinal nerve} is defined by a morphism $\text{Nerve} = (\epsilon, \sigma_!) : (M_N, N) \to (M_R, R)$ in $\Cat{MDN}$ with the following data:
\begin{align*}
\epsilon(\text{depolarized}) &= \epsilon(\text{hyperpolarized}) = \varphi \in \Mor\Cat{WD} \\ 
\text{where} \quad &\inp{\varphi}(\inp{N}_a) = \inp{R}_a, \\
&\inp{\varphi}(\inp{N}_b) = \outp{N}_b, \\
&\outp{\varphi}(\outp{R}_a) = \outp{N}_a \\
\epsilon(\text{polarized}) &= \psi \in \Mor\Cat{WD} \\
\text{where} \quad &\inp{\psi}(\inp{N}_b) = \outp{N}_b \\
& \outp{\psi}(\outp{R}_a) = \outp{N}_a \\
\sigma_!(\text{polarized}) &= \sigma_!(\text{depolarized}) = \sigma_!(\text{hyperpolarized}) = \ast 
\in M_R
\end{align*}

One then checks that the data satisfies the commutative diagram in the definition of a mode-dependent network, e.g. Definition~\ref{def:MDN}.
\end{example}

\begin{example}[Forming products and compositions]\label{ex:products}
We can form a simple model of the eye by taking the (symmetric, monoidal) product of all the nerve cells, i.e. $E = R_1 \sqcup R_2 \sqcup ... \sqcup R_n$. (Note, $(M_E, E)$ has mode set $M_E = \prod M_{R_i} \simeq \{\ast\}$.) To define a mode-dependent network on $E$ we then specify a morphism \[ \text{Eye} = (\epsilon, \sigma_!) : ((M_{R_1}, R_1), ..., (M_{R_n}, R_n)) \to (M_E, E).\] This morphism lives in the operad $\cat{O}_\Cat{MDN}$, so composing it with another morphism means ``zooming in'' on the diagram, while precomposing means ``zooming out''. For example, to display the interiors of each $R_i$, we precompose the mode-dependent network $\text{Eye} : ((M_{R_1}, R_1), ..., (M_{R_3}, R_3)) \to (M_E, E)$ with three copies of $\text{Nerve} : (M_N, N) \to (M_R, R)$, as in Figure~\ref{fig:eyeball}.

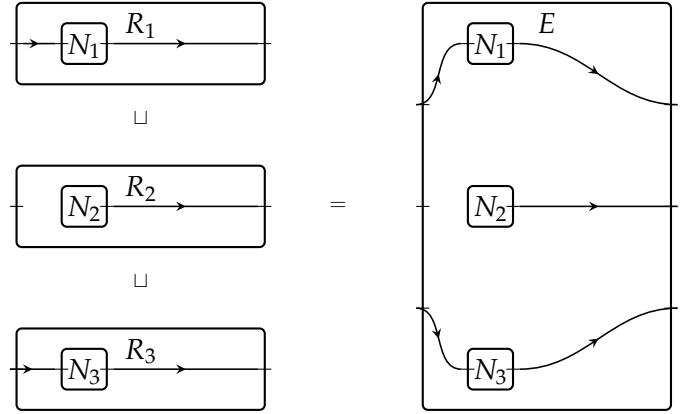
\begin{figure}[h!]
\centering
\[
	\begin{tikzpicture}[wiring diagram,bb port sep=1, bb port length=2.5pt, bbx=.6cm, bb min width=.6cm, bby=1.5ex]
		\node[bb={1}{1}, bb name=$N_1$] (R) {};
		\node[bb={1}{1}, fit={(R) ($(R.south)+(3,1)$)}, bb name=$R_1$] (box) {};
		\draw[ar] (box_in1') to (R_in1);
		\draw[ar] (R_out1) to (box_out1');
		\node[bb={0}{1}, below= 6 of R, bb name=$N_2$] (RK) {};
		\node[bb={1}{1}, fit={(RK) ($(RK.south)+(3,1)$)}, bb name=$R_2$] (box2) {};
		\draw[ar] (RK_out1) to (box2_out1');

		\node[bb={1}{1}, below= 6 of RK, bb name=$N_3$] (RL) {};
		\node[bb={1}{1}, fit={(RL) ($(RL.south)+(3,1)$)}, bb name=$R_3$] (box3) {};
		\draw[ar] (box3_in1) to (RL_in1);
		\draw[ar] (RL_out1) to (box3_out1');
	
		\node[bb={1}{1}, right=8 of R, bb name=${N_1}$] (R1) {};
		\node[bb={0}{1}, right=8 of RK, bb name=${N_2}$] (R2) {};
		\node[bb={1}{1}, right=8 of RL, bb name=${N_3}$] (R3) {};
		\node[bb={3}{3}, fit={(R1) (R2) (R3) ($(R1.north)+(3,0)$)},bb name =${E}$] (E) {};
		\draw[ar] (E_in1) to (R1_in1);
		\draw[ar] (E_in3) to (R3_in1);
		\draw[ar] (R1_out1) to (E_out1);
		\draw[ar] (R2_out1) to (E_out2);
		\draw[ar] (R3_out1) to (E_out3);
		\draw[label]
		    node[below = 10pt of box.south]     {$\sqcup$};
		\draw[label]
		    node[below = 10pt of box2.south]     {$\sqcup$};
		\draw[label]
		    node[right = 24pt of box2]     {$=$};
	\end{tikzpicture}
\]
\caption{The monoidal product of multiple mode-dependent networks, displayed with an instance of the mode-dependent network $\text{Eye} \circ (\text{Nerve}, \text{Nerve}, \text{Nerve})$.}
\label{fig:eyeball}
\end{figure}


\end{example}

\begin{proposition}

Every wiring diagram is canonically a mode-independent network in the following sense: There is a strong monoidal functor 
$$I\colon\Cat{WD}\to\Cat{MDN}$$
given by $I(X)\coloneqq(\singleton,X)$, where $\singleton$ is any choice of singleton set.

\end{proposition}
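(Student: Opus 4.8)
The plan is to define the functor $I$ explicitly, check it lands in $\Cat{MDN}$, verify functoriality directly from the composition law \eqref{dia:comp of events}, and then exhibit the structure isomorphisms that make $I$ strong monoidal; once the one-element mode set is unwound, each step is essentially bookkeeping.

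On objects, send a box $X\in\Ob\Cat{WD}$ to the modal box $I(X)\coloneqq(\singleton,X)$, where (abusing notation) $X$ also denotes the unique function $\singleton\to\Ob\Cat{WD}$ with value $X$; since this function factors through $\singleton$, $I(X)$ is a mode-independent box. On a morphism $\varphi\colon X\to Y$ of $\Cat{WD}$, set $I(\varphi)\coloneqq(\epsilon_\varphi,\id_\singleton)$, where $\epsilon_\varphi\colon\singleton\to\Mor\Cat{WD}$ is the unique function with value $\varphi$. First I would check that $(\epsilon_\varphi,\id_\singleton)$ is a legitimate morphism in $\Cat{MDN}$, i.e.\ that the square \eqref{eqn:MDN} commutes: $\dom\circ\epsilon_\varphi$ is constant at $\dom\varphi=X$, matching the left-hand $X\colon\singleton\to\Ob\Cat{WD}$, and $\cod\circ\epsilon_\varphi$ is constant at $\cod\varphi=Y=Y\circ\id_\singleton$, as required.

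Next, functoriality. For identities, $\epsilon_{\id_X}$ is constant at $\id_X$, and $(\epsilon_{\id_X},\id_\singleton)$ is by inspection the identity of $(\singleton,X)$ in $\Cat{MDN}$. For composites, given $\varphi\colon X\to Y$ and $\psi\colon Y\to Z$, the composite $I(\psi)\circ_{\Cat{MDN}}I(\varphi)$ has second component $\id_\singleton\circ\id_\singleton=\id_\singleton$, and by \eqref{dia:comp of events} its first component is the function $\ast\mapsto\psi\circ\varphi$ (the reindexing $\epsilon_1\sigma_0(m_0)$ being trivial when $M_0=\singleton$); hence $I(\psi)\circ I(\varphi)=I(\psi\circ\varphi)$. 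This is the one place I would be careful — confirming that \eqref{dia:comp of events} restricts along $I$ to ordinary composition in $\Cat{WD}$ rather than to its reverse or a twisted variant; in the one-element case this is transparent but worth writing out, and I expect it to be the only conceptual (as opposed to purely clerical) point in the proof.

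Finally, the strong monoidal structure. From \eqref{eq:monoidal MDN}, $I(X)\otimes I(Y)=(\singleton\times\singleton,\,X\sqcup Y)$ while $I(X\sqcup Y)=(\singleton,\,X\sqcup Y)$, so the canonical bijection $\singleton\times\singleton\iso\singleton$ furnishes an isomorphism $\mu_{X,Y}\colon I(X)\otimes I(Y)\to I(X\sqcup Y)$ in $\Cat{MDN}$, with event map constant at the identity wiring diagram on $X\sqcup Y$. The unit is matched on the nose, since $I(\emptyset)=(\singleton,\emptyset)$ is by definition the monoidal unit of $\Cat{MDN}$. It then remains to verify naturality of $\mu$ in both arguments and the associativity and unitality coherence diagrams for a (symmetric) monoidal functor; each of these reduces to the corresponding coherence for the $\times$-structure on $\singleton$ together with the already-established fact that $(\Cat{WD},\sqcup)$ is symmetric monoidal, so no computation is needed beyond tracking the $\singleton\times\singleton\iso\singleton$ isomorphisms.
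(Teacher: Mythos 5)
Your proposal is correct and follows the same route as the paper's proof, which simply defines $I(\varphi)=(\epsilon_\varphi,\id_\singleton)$ with $\epsilon_\varphi$ picking out $\varphi$ and declares the remaining verifications straightforward. You have merely written out the bookkeeping (the commutativity of \eqref{eqn:MDN}, functoriality via \eqref{dia:comp of events}, and the $\singleton\times\singleton\iso\singleton$ coherence) that the paper leaves to the reader.
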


\begin{proof}

This is straightforward. A wiring diagram $\varphi\in\Cat{WD}(X,Y)$ is sent to $I(\varphi)\coloneqq(\epsilon_\varphi,\singleton)\in\Cat{MDN}(I(X),I(Y))$, where $\epsilon_\varphi\colon\singleton\to\Cat{WD}(X,Y)$ picks out $\varphi$, and $\singleton$ is the identity function on the singleton set. This network is mode-independent because it has only one mode. It is easy to check that $I$ is a strong monoidal functor.
\end{proof}

Given any mode-dependent network $(\epsilon,\sigma)\colon(M,X)\to(N,Y)$ in $\Cat{MDN}$, it is often convenient to factor $\epsilon\in\prod_{m\in M}(X(m),Y(m))$ into two parts,
\begin{align*}
\inp{\epsilon}&\in \prod_{m\in M}\TFS\big(\inp{X}(m),\inp{Y}(m)\sqcup\outp{X}(m)\big)\\
\outp{\epsilon}&\in\prod_{m\in M}\TFS\big(\outp{Y}(m),\outp{X}(m)\big).
\end{align*}
These are obtained simply by considering the two components in (\ref{dia:wd}). Composing with the dependent product functor $\Cat{TFS}\op\to\Set$ from Definition~\ref{def:typed finite sets}, we obtain the following dependent functions:
\begin{align*}
\vinp{\epsilon}&\in\prod_{m\in M}\Set\big(\vinp{Y}(m)\times\voutp{X}(m),\vinp{X}(m)\big)\\
\voutp{\epsilon}&\in\prod_{m\in M}\Set\big(\voutp{X}(m),\voutp{Y}(m)\big).
\end{align*}
For any communicative mode $m\in M$, these functions specify how information will travel within the network ($\vinp{\epsilon}$) and how information will be exported from the network ($\voutp{\epsilon}$).

\chapter{Dynamical systems on mode-dependent networks}\label{sec:DS_MDN}
The goal of this paper is to define an algebra, i.e., a lax functor $\fun{P}\colon\Cat{MDN}\to\Cat{Set}$, of (synchronous) discrete dynamical systems on a mode-dependent network. Below, we will supply the data that defines $\fun{P}$ in Definition~\ref{def:P} and then prove that it satisfies the conditions of being an algebra in Proposition~\ref{prop:main result}.

Let $(M,X)\in\Ob\Cat{MDN}$ be a modal box, where $M\in\Set$ is a mode set and $X=(\inp{X},\outp{X})\colon M\to\Ob\Cat{WD}$ is a modal box. Here $\inp{X},\outp{X}\colon M\to\TFS$. Recall from Definiton~\ref{def:typed finite sets} the notation $\vinp{X},\voutp{X}\colon M\to\Set$ for their dependent products. We define the set $\fun{P}(M,X)$ as
\begin{align}\label{eq:P on object}
\fun{P}&(M,X)\coloneqq
\left\{ 
		(S,q,\inp{f},\outp{f}) \;\;\middle|\;\;
	\parbox{2.6in}{\centering\linespread{1.5}
		$S\in\Set$,\quad$q\colon S\to M,$\\
		$\inp{f}\in\displaystyle\prod_{s\in S}\Set\Big(\vinp{X}\big(q(s)\big),S\Big),$\\
		$\outp{f}\in\displaystyle\prod_{s\in S}\voutp{X}\big(q(s)\big)$
	}\right
\}
\end{align}
That is, an element of $\fun{P}(M,X)$ is a 4-tuple consisting of:
\begin{itemize}
\item a set $S\in\Set$, called the \emph{state set};
\item a function $q\colon S\to M$, called the \emph{underlying mode function};
\item for each $s\in S$, with underlying mode $m=q(s)$, a function $\inp{f}(s)\colon\vinp{X}(m)\to S$, called the \emph{state update function}; and
\item for each $s\in S$, with underlying mode $m=q(s)$, an element $\outp{f}\in\voutp{X}(m)$, called the \emph{readout}.
\end{itemize}
We may denote an element of $\fun{P}(M,X)$ simply by $(S,q,f)$, and we call $f=(\inp{f},\outp{f})$ an \emph{open dynamical system} with state set $S$, following \cite{VSL}. The whole 3-tuple $(S,q,f)$ will be called a \emph{modal dynamical system}.

For the lax monoidal structure, one coherence map, $\fun{P}(M,X)\times\fun{P}(N,Y)\to\fun{P}(M\times N,X\sqcup Y)$, is given by cartesian products
\begin{align}
\nonumber
S_{X\times Y}&\coloneqq S_X\times S_Y
\\\label{dia:coherence for P}
q_{X\times Y}&\coloneqq q_X\times q_Y
\\\nonumber
f_{X\times Y}&\coloneqq f_X\times f_Y 
\end{align}
Note the isomorphism (\ref{dia:overline is strong}). The other coherence map, $\singleton\to\fun{P}(\singleton,\emptyset)$, is the element $(\singleton,!,!,!)$, where each $!$ denotes the unique morphism of the evident type.

Given a morphism $(\epsilon,\sigma)\colon (M,X)\to (N,Y)$ in $\Cat{MDN}$, we need to provide a function $\fun{P}(\epsilon,\sigma)\colon\fun{P}(M,X)\to\fun{P}(N,Y)$. For an arbitrary modal dynamic system $(S,q,f)\in\fun{P}(M,X)$, we define 
\begin{equation}\label{eq:P on morphisms}
\fun{P}(\epsilon,\sigma)(S,q,f)\coloneqq(S,r,g)
\end{equation}
where $S$ is unchanged, $r=\sigma\circ q$ is the composite $S\To{q}M\To{\sigma}N$, and for every $s\in S$ the update function $\inp{g}(s)$ and the readout $\outp{g}(s)$ are given as follows. Let $m=q(s)$, so we have $\vinp{\epsilon}(m)\colon\vinp{Y}(m)\times\voutp{X}(m)\to\vinp{X}(m)$ and $\voutp{\epsilon}(m)\colon\voutp{X}(m)\to\voutp{Y}(m)$. Then the readout is given by
\begin{align}\label{dia:P on morphisms--out}
	\outp{g}(s)=\voutp{\epsilon}(m)(\outp{f}(s))
\end{align}
and the update $\inp{g}(s)$ is given by the following composition in $\Set$:
\begin{align*}
	\begin{tikzcd}[column sep=7em, row sep = 7ex, ampersand replacement=\&]
		\vinp{Y}(m)\ar[d,dashed,"\inp{g}(s)"']\ar[r,"{\vinp{Y}(m)\times\outp{f}(s)}"]\&\vinp{Y}(m)\times\voutp{X}(m)\ar[d,"\vinp{\epsilon}(m)"]\\
		S\&\vinp{X}(m)\ar[l,"\inp{f}(s)"]
	\end{tikzcd}
\end{align*}
We can restate this (still with $m=q(s)$) as follows:
\begin{equation}\label{eqn:equational restatement}
\inp{g}(s)(y)=\inp{f}(s)\Big(\vinp{\epsilon}(m)\big(y,\outp{f}(s)\big)\Big)
\end{equation}

\begin{definition}\label{def:P}

We define $\fun{P}$ as the data (\ref{eq:P on object}), (\ref{dia:coherence for P}), and (\ref{eq:P on morphisms}) above. So far they are only data; we will show that they constitute a lax monoidal functor $\fun{P}\colon\Cat{MDN}\to\Set$ in Proposition~\ref{prop:main result}. 

\end{definition}

\begin{remark}

If $(M,X)$ is a mode-independent box, we can write $X=X(m)$ for all $m\in M$ at which point we can rewrite the state update and readout as functions $\inp{f}\colon S\times \vinp{X}\to S$ and $\outp{f}\colon S\to\voutp{X}$. 

Here is the intuitive idea of how information flows through a mode-dependent network. It begins with each inner box $X_i$ converting its state to an output. These outputs, together with the input to $Y$, are carried through the wires and either exported from $Y$ or fed as inputs to the $X_i$. These inner boxes then use this input to update their state, and hence their communicative mode, and hence their own shape and that of the network. The process repeats.
\end{remark}

\begin{example}\label{ex:dynamics}
Before proving the main result, we motivate it by defining a modal dynamical system $(S_N, q_N, f_N) \in \mathcal{P}(M_N, N)$ on our simple model of the retinal nerve $(M_N, N)$ in Example~\ref{ex:MDN}. For convenience, we reproduce the diagram from Figure~\ref{fig:retinal nerve}.

\[
	\begin{tikzpicture}[wiring diagram,bb port sep=1, bb port length=2.5pt, bbx=.6cm, bb min width=.6cm, bby=1.3ex]
		\node[bb={1}{1}, bb name=$N$] (R) {};
		\node[bb={1}{1}, fit={(R) ($(R.south)+(3,0)$)}, bb name=$R$] (box) {};
		\draw[ar] (box_in1') to (R_in1);
		\draw[ar] (R_out1) to (box_out1');
		\node[bb={0}{1}, right= 9 of R, bb name=$N$] (R2) {};
		\node[bb={1}{1}, fit={(R2) ($(R2.south)+(3,0)$)}, bb name=$R$] (box2) {};
		\draw[ar] (R2_out1) to (box2_out1);
		\draw[label] 
		    node[below = 3pt of box.south]     {$m_N = \text{polarized}$};
		\draw[label]
		    node[below = 3pt of box2.south]     {$m_N = \text{depolarized, hyperpolarized}$};
		\draw[label]
		    node[above = 3pt of box.north]     {$\varphi : N(m_N) \to R(\ast)$};
		\draw[label]
		    node[above = 3pt of box2.north]     {$\psi : N(m_N) \to R(\ast)$};
		\draw[label] node[left=1.2pt of box_in1]     {$\inp{R}_a : [0,1]$};
		\draw[label] node[left=1.2pt of box2_in1]     {$\inp{R}_a : [0,1]$};
		\draw[label] node[right=1.2pt of box_out1]     {$\outp{R}_a : \{0,1\}$};
		\draw[label] node[right=1.2pt of box2_out1]     {$\outp{R}_a : \{0,1\}$};
	\end{tikzpicture}
\]

\begin{description}
\item[State sets.] Let $S_N = M_N \times A$, where $A = \mathbb{R}^+$ is the adaptation of the cell to light---the more adapted to light the cell is, the more light needed to activate the cell. The wrapper $R$ has state set equal to the product of its internal boxes, in this case just $N$, so $S_R = S_N$. $M_R = \{\ast\}$.
\item[Underlying mode function.] We define $q_N : S_N \to M_N$ to be the first projection. $q_R: S_R \to M_R$ is trivial, since $M_R = \{\ast\}$.
\item[State update function.] Define two constants, a firing threshold $\alpha \in [0,1]$ and an adaptation constant $\beta > 1$. For $m = \text{polarized}$, let $\inp{f}_N : S_N \times [0,1] \to S_N$ be defined by 
\[
\inp{f}_N(m, a, x) = 
\begin{cases}
(\text{depolarized}, a + x/\beta) & x - a \geq \alpha \\ 
(\text{polarized}, a/\beta) & \text{otherwise}
\end{cases}
\]
In other words, the cell depolarizes (``fires'') from its inactive state if the input is above a certain threshold $\alpha$, and simultaneously increases its stored adaptation parameter $a \mapsto a + x/\beta$.

If the cell is either its depolarized or hyperpolarized modes, then there is no separate input for light received, so the update function $\inp{f}_N$ depends only on the previous state, e.g. \[\inp{f}_N(m,a) = \begin{cases}
(\text{hyperpolarized}, a/\beta) & m = \text{depolarized} \\
(\text{polarized}, a/\beta) & m = \text{hyperpolarized}
\end{cases}.
\]
The state update function for the wrapper $(M_R,R)$ is just the same as that for $(M_N, N)$.
\item [Readout.] In this case we can define the same readout function $\outp{f}_N : S_N \to \{0,1\}$ for all modes: \[\outp{f}_N(m,a) = \begin{cases}1 & m = \text{polarized} \\ 0 & \text{otherwise}\end{cases}.\]
$\outp{f}_R$ is defined identically.
\end{description}

The point of calling $\mathcal{P} : \Cat{MDN} \to \Set$ an algebra (i.e. a functor) is that one can construct new modal dynamical systems using essentially algebraic operations. For example, let 
\[\text{Eye'} = \text{Eye} \circ (\text{Nerve}, ..., \text{Nerve})\] 
be the mode-dependent network of the ``eyeball'' we showed in Figure~\ref{fig:eyeball}. Then we can obtain a new modal dynamical system $(S_E, q_E, f_E) \in \mathcal{P}(M_E, E)$ by applying a function 
\[\mathcal{P}(\text{Eye'}) : \mathcal{P}((M_{N_1}, N_1) \otimes (M_{N_2}, N_2) \otimes ...)) \to \mathcal{P}(M_E,E)\]
to $(S_N, q_N, f_N)$ as defined above. Specifying 
\[(S_E, q_E, f_E) = \mathcal{P}(\text{Eye'})((S_{N_1}, q_{N_1}, f_{N_1}), ..., (S_{N_k}, q_{N_k}, f_{N_k}))\] is fairly straightforward: we first obtain an intermediate modal dynamical system on $(M_E,E)$---think of this as the most `general' dynamical system we can place on $(M_E,E)$---by using the given coherence map (\ref{dia:coherence for P}) to define a dynamical system with the following data
\begin{align*}
S_E &= S_{N_1} \times S_{N_2} \times ... \times S_{N_k} \\
q_E &= q_{N_1} \times  q_{N_2} \times ... \times q_{N_k} \\
\inp{f}_E &= \inp{f}_{N_1} \times \inp{f}_{N_2} \times ... \times \inp{f}_{N_k}\\
\outp{f}_E &= \outp{f}_{N_1} \times \outp{f}_{N_2} \times ... \times \outp{f}_{N_k}
\end{align*}
We can then define a new dynamical system by defining a map on this dynamical system according to Equations~\eqref{dia:P on morphisms--out}~and~\eqref{eqn:equational restatement}.

Since $\text{Eye'}$ here is just the disjoint union of the several retinal nerves (wrapped up in wrappers $R_i$), this intermediate modal dynamical system is just what we want.

\end{example}


\begin{proposition}\label{prop:main result}

With the data given in Definition~\ref{def:P}, the map $\fun{P}\colon\Cat{MDN}\to\Set$ is a lax monoidal functor.

\end{proposition}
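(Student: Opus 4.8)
The plan is to verify the functor axioms for $\fun{P}$ and then the coherence axioms making it lax monoidal; everything is routine bookkeeping except compatibility with composition in $\Cat{MDN}$, which requires unwinding the wiring-diagram composition law \eqref{eqn:comp law WD}.

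\textbf{Identities.} A short check with \eqref{eqn:comp law WD} shows that the identity of a box $X$ in $\Cat{WD}$ is $1_X=(\iota_1,\id_{\outp{X}})$, where $\iota_1\colon\inp{X}\to\inp{X}\sqcup\outp{X}$ is the coprojection. Applying the dependent-product functor together with the strongness isomorphism \eqref{dia:overline is strong} gives $\voutp{1_X}=\id_{\voutp{X}}$ and $\vinp{1_X}=\pi_1\colon\vinp{X}\times\voutp{X}\to\vinp{X}$. Since the identity of $(M,X)$ in $\Cat{MDN}$ is $(\epsilon_{\id},\id_M)$ with $\epsilon_{\id}(m)=1_{X(m)}$, substituting these into \eqref{dia:P on morphisms--out} and \eqref{eqn:equational restatement} yields $\outp{g}=\outp{f}$ and $\inp{g}(s)(y)=\inp{f}(s)\bigl(\pi_1(y,\outp{f}(s))\bigr)=\inp{f}(s)(y)$, so $\fun{P}(\epsilon_{\id},\id_M)=\id_{\fun{P}(M,X)}$.

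\textbf{Composition.} Fix composable morphisms $(\epsilon_0,\sigma_0)\colon(M_0,X_0)\to(M_1,X_1)$ and $(\epsilon_1,\sigma_1)\colon(M_1,X_1)\to(M_2,X_2)$, and a modal dynamical system $(S,q,f)\in\fun{P}(M_0,X_0)$. Both $\fun{P}(\epsilon_1,\sigma_1)\circ\fun{P}(\epsilon_0,\sigma_0)$ and $\fun{P}\bigl((\epsilon_1,\sigma_1)\circ_{\Cat{MDN}}(\epsilon_0,\sigma_0)\bigr)$ leave $S$ unchanged and produce the mode function $\sigma_1\circ\sigma_0\circ q$, so it remains to compare readouts and updates at each $s\in S$; write $m=q(s)$ and let $\epsilon(m)=\epsilon_1(\sigma_0 m)\circ\epsilon_0(m)$ as in \eqref{dia:comp of events}. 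For the readout, the right-hand triangle of \eqref{eqn:comp law WD} gives $\outp{\epsilon}(m)=\outp{\epsilon_0}(m)\circ\outp{\epsilon_1}(\sigma_0 m)$ in $\TFS$, and since the dependent-product functor is contravariant this becomes $\voutp{\epsilon}(m)=\voutp{\epsilon_1}(\sigma_0 m)\circ\voutp{\epsilon_0}(m)$ --- exactly the composite of the two readout rules \eqref{dia:P on morphisms--out}. For the update, apply the dependent-product functor to the left-hand square of \eqref{eqn:comp law WD} with $X=X_0(m)$, $Y=X_1(\sigma_0 m)$, $Z=X_2(\sigma_1\sigma_0 m)$, and use \eqref{dia:overline is strong} to identify $\overline{\inp{Z}\sqcup\outp{Y}\sqcup\outp{X}}\iso\vinp{Z}\times\voutp{Y}\times\voutp{X}$; chasing an element through shows $\vinp{\epsilon}(m)(z,o)=\vinp{\epsilon_0}(m)\bigl(\vinp{\epsilon_1}(\sigma_0 m)(z,\voutp{\epsilon_0}(m)(o)),\,o\bigr)$, which is precisely the formula obtained by iterating \eqref{eqn:equational restatement} twice. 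Feeding this back into \eqref{eqn:equational restatement} makes the two updates coincide, so $\fun{P}$ respects composition.

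\textbf{Lax-monoidal coherences and the main obstacle.} Naturality of the coherence maps $\mu_{(M,X),(N,Y)}$ in each variable follows from \eqref{dia:coherence for P}: the monoidal product on $\Cat{MDN}$ --- cartesian product of mode sets, disjoint union of boxes, \eqref{eq:monoidal MDN} --- and the definition of $\fun{P}$ on morphisms are both assembled from cartesian products and disjoint unions, and \eqref{dia:overline is strong} converts the disjoint unions into cartesian products, so the naturality squares commute after unwinding; compatibility with the symmetries is the same observation. The associativity axiom reduces to associativity of cartesian product in $\Set$ (on state sets and mode sets) and of $\sqcup$ in $\TFS$, coherently with \eqref{dia:overline is strong}, while the unit axioms reduce to $\emptyset\sqcup X\cong X$, $\overline{\emptyset}\iso\singleton$ (Example~\ref{ex:TFS}(3)), $\singleton\times M\iso M$, and $S\times\singleton\iso S$, which together show the chosen element $(\singleton,!,!,!)\in\fun{P}(\singleton,\emptyset)$ is a two-sided unit. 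The only genuinely non-formal step is the element chase in the composition case: verifying that composition of wiring diagrams induces on dependent products exactly the doubly-iterated substitution \eqref{eqn:equational restatement}, where the variances of the dependent-product functor and the placement of $\voutp{\epsilon_0}(m)$ must be tracked carefully.
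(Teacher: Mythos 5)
Your proposal is correct and follows essentially the same route as the paper's proof: the substance in both is the element chase showing that the composition law \eqref{eqn:comp law WD}, pushed through the contravariant dependent-product functor, yields exactly the twice-iterated substitution of \eqref{dia:P on morphisms--out} and \eqref{eqn:equational restatement}, while the coherence and unit axioms reduce to properties of cartesian products. You are somewhat more thorough than the paper in spelling out the identity check and the naturality of the coherence maps, which the paper dismisses as clear, but the mathematical content is the same.
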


\begin{proof}

It is clear that the coherence maps satisfy the necessary unitality and associativity properties, because they are just given by cartesian products. So it remains to show that $\fun{P}$ is functorial, i.e., that it commutes with composition. 

Suppose given morphisms
$$(M_0,X_0)\To{(\epsilon_0,\sigma_0)}(M_1,X_1)\To{(\epsilon_1,\sigma_1)}(M_2,X_2)$$ 
in $\Cat{MDN}$, and let $(\epsilon,\sigma_1\circ\sigma_0)=(\epsilon_1,\sigma_1)\circ(\epsilon_0,\sigma_0)$ be the composite as in (\ref{dia:comp of events}). Suppose that $(S,q_0,f_0)\in\fun{P}(M_0,X_0)$ is an arbitrary element. For notational convenience, define
\begin{align*}
(S,q_1,f_1)&\coloneqq\fun{P}(\epsilon_0,\sigma_0)(S,q_0,f_0),\\
(S,q_2,f_2)&\coloneqq\fun{P}(\epsilon_1,\sigma_1)(S,q_1,f_1),\\ 
(S,q_{2'},f_{2'})&\coloneqq\fun{P}(\epsilon,\sigma_1\circ\sigma_0)(S,q_0,f_0),
\end{align*}
see (\ref{eq:P on morphisms}). We want to show that $(S,q_{2},f_{2})=^?(S,q_{2'},f_{2'})$. It is obvious that
$$q_{2}=q_{2'}=\sigma_1\circ\sigma_0\circ q_0,$$ 
so it only remains to show that $f_{2}=^?f_{2'}$, i.e., that $\inp{f_{2}}=^?\inp{f_{2'}}$ and $\outp{f_{2}}=^?\outp{f_{2'}}$. 

We begin by choosing any $s\in S$, and we let $m_0=q_0(s)$, $m_1=q_1(s)=\sigma_0(q_0(s))$, and $m_2=q_2(s)$. For the first desired equation, we are trying to prove the equality $\inp{f_{2}}(s)=^?\inp{f_{2'}}(s)$ between functions $\vinp{X_2}(m_2)\to S$. Thus we choose an element $x_2\in\vinp{X_2}(m_2)$ and, using \eqref{dia:P on morphisms--out} and \eqref{eqn:equational restatement}, we have:
\begin{align*}
\inp{f_2}(s)(x_2)
&=\inp{f_1}(s)\Big(\vinp{\epsilon_1}(m_1)\big(x_2,\outp{f_1}(s)\big)\Big)
\\
&=\inp{f_0}(s)\bigg(\vinp{\epsilon_0}(m_0)\Big(\vinp{\epsilon_1}(m _1)\big(x_2,\outp{f_1}(s)\big),\outp{f_0}(s)\Big)\bigg)\\
&=\inp{f_0}(s)\Bigg(\vinp{\epsilon_0}(m_0)\bigg(\vinp{\epsilon_1}(m_1)\Big(x_2,\voutp{\epsilon_0}(m_0)\big(\outp{f_0}(s)\big)\Big),\outp{f_0}(s)\bigg)\Bigg)\\
&=\inp{f_0}(s)\Big(\vinp{\epsilon}(m_0)\big(x_2,\outp{f_0}(s)\big)\Big)=\inp{f_{2'}}(s)(x_2)
\end{align*}
For the second desired equation, we are trying to prove the equality $\outp{f_{2}}(s)=^?\outp{f_{2'}}(s)$ between elements of $\voutp{X_2}$. Again using \eqref{dia:P on morphisms--out}, we have:
\begin{align*}
\outp{f_2}(s)
&=
\voutp{\epsilon_1}(m_1)\big(\outp{f_1}(s)\big)\\
&=
\voutp{\epsilon_1}(m_1)\Big(\voutp{\epsilon_0}(m_0)\big(\outp{f_0}(s)\big)\Big)\\ 
&=
\voutp{\epsilon}(m_0)\big(\outp{f_0}(s)\big)=\outp{f_{2'}}(s),
\end{align*}
This concludes the proof.
\end{proof}

\chapter{Applications}\label{sec:applications}
In principle, operads are general enough that they can be applied in any instance where nested graphs, nested networks, and nested graphical models are used. Composition in the operad immediately gives the semantics of networks of networks, along with a consistent and procedural way to plug different networks into each other. Mode-dependency models the additional interaction between dynamics and topology. The formalism is well-suited to heterogenous graphical models of dynamical systems with many functional dependencies between variables \cite{VSL}, but it can also be adapted to more homogeneous models where all ports have the same type and only the connectivity matters (e.g. in studies of node failure \cite{Buld, Reis}).


The remainder of this section is dedicated to illustrating some of these applications by extending our primitive model of the eye. 

\begin{example}[Adding behavior]\label{ex:behavior}
One typical task in constructing a model or simulation of a subject is to add some dynamics or behavior to an existing system. Recall our simple model of the eye from Example~\ref{ex:MDN}. It is clearly incomplete---given time, we might want to model ganglion cells that summarize the behavior of several retinal nerve cells, the special dynamics of detector cells (where ``activating'' a rod or cone actually means inhibiting its spontaneous rate of fire), and the many projections from retinal nerves to other nuclei in the cortex. 

For now, we will do something relatively simple: add the ability to ``blink''. That is, we need to change the mode set of our eyeball $E$ from $\{\ast\}$ to something like $\{\text{open, shut}\}$, corresponding to the two wiring diagrams below
\[
	\begin{tikzpicture}[wiring diagram,bb port sep=1, bb port length=2.5pt, bbx=.6cm, bb min width=.6cm, bby=1.5ex]
		\node[bb={1}{1}, bb name=${N_1}$] (R1) {};
		\node[bb={0}{1}, below=1 of R1, bb name=${N_2}$] (R2) {};
		\node[bb={1}{1}, below=1 of R2, bb name=${N_3}$] (R3) {};
		\node[bb={3}{3}, fit={(R1) (R2) (R3)},bb name =$$] (E) {};
		\draw[ar] (E_in1) to (R1_in1);
		\draw[ar] (E_in3) to (R3_in1);
		\draw[ar] (R1_out1) to (E_out1);
		\draw[ar] (R2_out1) to (E_out2);
		\draw[ar] (R3_out1) to (E_out3);
		\node[bb={1}{1}, right=4 of R1, bb name=${N_1}$] (K1) {};
		\node[bb={0}{1}, below=1 of K1, bb name=${N_2}$] (K2) {};
		\node[bb={1}{1}, below=1 of K2, bb name=${N_3}$] (K3) {};
		\node[bb={0}{3}, fit={(K1) (K2) (K3)},bb name =$$] (K) {};
		\draw[ar] (K1_out1) to (K_out1);
		\draw[ar] (K2_out1) to (K_out2);
		\draw[ar] (K3_out1) to (K_out3);
		\draw[label] node[below=5pt of E]     {$m = \text{open}$};
		\draw[label] node[below=5pt of K]     {$m = \text{closed}$};
	\end{tikzpicture}
\]
It's possible to specify the dynamics for open/shut directly inside $E$; one simply defines the state set of $E$ to be \[ S_E = \{\text{open, shut}\} \times S_{N_1} \times S_{N_2} \times ... \times S_{N_k} \] and the state update function $\inp{f}_E$ to be a similar product \[ \inp{f}_E = \inp{f}_{\text{open/shut}} \times \inp{f}_{N_1} \times \inp{f}_{N_2} \times ... \times \inp{f}_{N_k} \] where $\inp{f}_{\text{open/shut}} : S_E \to \{\text{open/shut}\}$ maps to closed if a sufficient number of neurons are depolarized, and to open otherwise.

For various reasons, this way of specifying the dynamics may not be preferable. For one, it is not biological: the eye offloads this sort of computation to other areas of the brain. Nor is it causal; reflexes do not trigger on stored states but on active signals. Finally, it is not modular: we would like to be able to modify the dynamics of a system directly by rewiring the diagram---i.e. changing the topology---but in this case we would have to change the state update function directly.

As an alternative, we can outsource the computation of $\inp{f}_{\text{open/shut}}$ to another modal box $P$ (here, $P$ stands for pons, an area of the brainstem that mediates the blink reflex). We define $S_P = \{ \text{open, shut} \}$ and $\inp{f}_P : \{0,1\}^n \to S_P$ to map to closed if the average of the signals is above a certain threshold, and to open otherwise. $M_P = \{\ast\}$. See Figure~\ref{fig:blink}. Note that we need to add a new input port to box $E$ to receive an open or shut signal from $P$. 

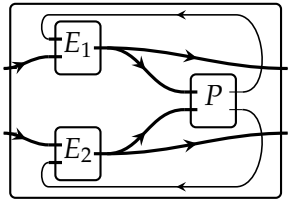
\begin{figure}[h!]
\centering
\[
	\begin{tikzpicture}[wiring diagram,bb port sep=1, bb port length=2.5pt, bbx=.6cm, bb min width=.6cm, bby=1.3ex]
		\node[bbthick={2}{1}{very thick}{very thick}, bb name=${E_1}$] (E1) {};
		\node[bbthick={2}{1}{very thick}{very thick}, below= 3 of E1, bb name=${E_2}$] (E2) {};
		\node[bbthick={2}{2}{very thick}{thin}, below right = 0 and 2 of E1, bb name=${P}$] (C) {};
		\node[bb={2}{2}, fit={(E1) (E2) (C) ($(E1.north)+(0,0)$)},bb name =$$] (V) {};
		\draw[ar, very thick] (E1_out1) to (C_in1);
		\draw[ar, very thick] (E2_out1) to (C_in2);
		\draw[ar] let \p1=(C.north east), \p2=(E1.north west), \n1={\y1+\bby}, \n2=\bbportlen in (C_out1) to[in=0] (\x1,\y2+\n2) -- (\x2-\n2,\y2+\n2) to[out=180] (E1_in1);
		\draw[ar] let \p1=(C.south east), \p2=(E2.south west), \n1={\y1+\bby}, \n2=\bbportlen in (C_out2) to[in=0] (\x1,\y2-\n2) -- (\x2-\n2,\y2-\n2) to[out=180] (E2_in2);
		\draw[ar, very thick] (V_in1) to (E1_in2);
		\draw[ar, very thick] (V_in2) to (E2_in1);
		\draw[ar, very thick] (E1_out1) to (V_out1);
		\draw[ar, very thick] (E2_out1) to (V_out2);
	\end{tikzpicture}
\]
\caption{Two eyes are linked to a modal box $P$ that controls blinking.}
\label{fig:blink}
\end{figure}
\end{example}
For later use, we define this entire system as a mode-dependent network $\text{Blink} : (E_1, E_2, P) \to B$, where $B$ is the containing box.

\begin{example}[Neural networks]\label{ex:nnet}
In (artificial) neural networks, we use neurons to form interconnected layers (there is no connection within a layer). Each neural network can be thought of as a classifier, e.g. a $\{0,1\}$-valued function of the inputs: nodes in the first layer represent features, nodes in the second layer interpret these features into new features, and so on for additional layers. We can model such neural networks as mode-dependent networks and connect them to other mode-dependent networks such as Example~\ref{ex:behavior}. For simplicity, we only consider feedforward networks, e.g. networks of neurons without loops.

A (cortical) \emph{neuron} is a mode-dependent network \[\text{Neuron} = (\epsilon, \sigma_!) : ((M_N, N), (M_S, S)) \to (M_R, R)\] with a nucleus $(M_N, N)$, a summation box $S = (M_S,S)$, and a wrapper $R = (M_R, R)$. Both $S$ and $R$ are mode-independent; $M_N =$ \{polarized, depolarized, hyperpolarized\}, as in Example~\ref{ex:dynamics}. The nucleus is incapable of receiving signals when it is either depolarized (firing), or hyperpolarized (just finished firing).
\[
	\begin{tikzpicture}[wiring diagram,bb port sep=1, bb port length=2.5pt, bbx=.6cm, bb min width=.6cm, bby=1.3ex]
		\node[bb={1}{1}] (N) {$N$};
		\node[bb={5}{1}, left=2 of N, bb name=$S$] (S) {};
		\node[bb={5}{1}, fit={(N) (S) ($(N.south)+(3,0)$)}, bb name=$R$] (box) {};
		\draw[ar] (box_in1') to (S_in1);
		\draw[ar] (box_in2') to (S_in2);
		\draw[ar] (box_in3') to (S_in3);
		\draw[ar] (box_in4') to (S_in4);
		\draw[ar] (box_in5') to (S_in5);
		\draw[ar] (S_out1) to (N_in1);
		\draw[ar] (N_out1) to (box_out1');
		\draw[label] 
		    node[below = 3pt of box.south]     {$m_N = \text{polarized}$};
		\draw[label]
		    node[above = 3pt of box.north]     {$\varphi : (N(m_N), S(\ast)) \to R(\ast)$};
		\draw[label] node[left=1.2pt of box_in1]     {$\inp{N}_a : \{0,1\}$};
		\draw[label] node[right=1.2pt of box_out1]     {$\outp{N}_a : \{0,1\}$};
	\end{tikzpicture}
\]

\[
	\begin{tikzpicture}[wiring diagram,bb port sep=1, bb port length=2.5pt, bbx=.6cm, bb min width=.6cm, bby=1.3ex]
		\node[bb={0}{1}] (N) {$N$};
		\node[bb={5}{1}, left=2 of N, bb name=$S$] (S) {};
		\node[bb={5}{1}, fit={(N) (S) ($(N.south)+(3,0)$)}, bb name=$R$] (box) {};
		\draw[ar] (box_in1') to (S_in1);
		\draw[ar] (box_in2') to (S_in2);
		\draw[ar] (box_in3') to (S_in3);
		\draw[ar] (box_in4') to (S_in4);
		\draw[ar] (box_in5') to (S_in5);
		\draw[ar] (N_out1) to (box_out1');
		\draw[label] 
		    node[below = 3pt of box.south]     {$m_N = \text{depolarized, hyperpolarized}$};
		\draw[label]
		    node[above = 3pt of box.north]     {$\varphi : (N(m_N), S(\ast)) \to R(\ast)$};
		\draw[label] node[left=1.2pt of box_in1]     {$\inp{N}_a : \{0,1\}$};
		\draw[label] node[right=1.2pt of box_out1]     {$\outp{N}_a : \{0,1\}$};
	\end{tikzpicture}
\]

To define a modal dynamical system on top of $\text{Neuron} = (\epsilon, \sigma_!)$, we provide the following data: 
\begin{description}
\item[State sets.] Let $k = |\inp{S}|$ be the number of inputs to $S$. Then $S_N = M_N =$ \{polarized, depolarized, hyperpolarized\} and $S_S = \mathbb{R}^+ \times W_S$, where $W_S= \mathbb{R}^k$ denotes a space of weight vectors. As the container, $R$ has state set $S_R = S_N \times S_S$. 
\item[Underlying mode function.] $q_S : S_S \to M_S$ is trivial, since the mode set of $S$ is trivial. Similarly with $q_R$. $q_N : S_N \to M_N$ is just the identity.
\item[State update function.] Define a firing threshold $\alpha \in [0,1]$. The state update function for $\inp{f} : S_N \times \{0,1\}^k \to S_N$ is given by \[\inp{f}_S(s, w_1,...,w_k, x_1, ..., x_k) = (\sum_{i}^k w_i x_i, w_1, ..., w_k)\] where $w \in W_S$. When the mode $N$ is polarized, \[\inp{f}_N(x,m) = \begin{cases}\text{depolarized} & x > \alpha \\ \text{polarized} & \text{otherwise} \end{cases}.\]
When the mode of $N$ is depolarized or hyperpolarized, \[ \inp{f}_N(m) = \begin{cases}\text{hyperpolarized} & m = \text{depolarized} \\ \text{polarized} & m = \text{hyperpolarized}\end{cases}.\]
\item[Readout.] Finally, the readout of $S$ is just $\outp{f}_S(s, w_1, ..., w_k) = s$, while the readout of $N$ is \[ \outp{f}_N(m) = \begin{cases}1 & m = \text{polarized} \\ 0 & \text{otherwise}\end{cases}.\] The readout function for the wrapper $R$ is just the product $\outp{f}_S \times \outp{f}_N$.
\end{description}

Given a modal dynamical system on a neuron, we can form new modal dynamical systems by forming products and compositions of various copies of $\text{neuron}, \sigma_!)$. A \emph{layer} of neurons is a mode-dependent network \[\text{V}_i = ( \epsilon, \sigma_!) : (\text{Neuron} \otimes ... \otimes \text{Neuron}) \to V_i\] formed by a tensor product of neurons, with the condition that there are no connections between neurons inside a layer.
\[
	\begin{tikzpicture}[wiring diagram,bb port sep=1, bb port length=2.5pt, bbx=.6cm, bb min width=.6cm, bby=1.5ex]
		\node[bbthick={1}{1}{very thick}{thin}, bb name=$S_1$] (V11) {};
		\node[bbthick={1}{1}{very thick}{thin}, below= 3 of V11, bb name=$S_2$] (V12) {};
		\node[bbthick={1}{1}{very thick}{thin}, below= 3 of V12, bb name=$S_3$] (V13) {};
		\node[bbthick={1}{1}{very thick}{thin}, below= 3 of V13, bb name=$S_4$] (V14) {};
		\node[bb={1}{1}, right= 1 of V11, bb name=$N_1$] (N1) {};
		\node[bb={1}{1}, right= 1 of V12, bb name=$N_2$] (N2) {};
		\node[bb={1}{1}, right= 1 of V13, bb name=$N_3$] (N3) {};
		\node[bb={1}{1}, right= 1 of V14, bb name=$N_4$] (N4) {};
		
		\node[bbthick={4}{4}{very thick}{thin}, fit={(V11) (V12) (V13) (V14) (N1) (N2) (N3) (N4) ($(V11.north)+(0,2)$) ($(V14.south)+(0,-2)$)}, bb name=${V_1}$] (V1) {};
		\draw[very thick] (V1_in1') to (V11_in1);
		\draw[very thick] (V1_in2') to (V12_in1);
		\draw[very thick] (V1_in3') to (V13_in1);	
		\draw[very thick] (V1_in4') to (V14_in1);
		\draw[ar] (V11_out1) to (N1_in1');
		\draw[ar] (V12_out1) to (N2_in1');
		\draw[ar] (V13_out1) to (N3_in1');
		\draw[ar] (V14_out1) to (N4_in1');		
		\draw (N1_out1) to (V1_out1');
		\draw (N2_out1) to (V1_out2');
		\draw (N3_out1) to (V1_out3');
		\draw (N4_out1) to (V1_out4');		
	\end{tikzpicture}
\]
Just as in Figure~\ref{fig:eyeball}, the diagram above depicts a \emph{composition} of the mode-dependent network $\text{V}_1 : (R_1, ..., R_4) \to V_1$ with several copies of $\text{Neuron} : (S,(M_N, N)) \to R$ to form a new mode-dependent network, $\text{V}_1 \circ (\text{Neuron}, ..., \text{Neuron})$. Again, the composition is what allows us to ``zoom in'' on the structure of each individual neuron. As before, the thicker arrows denote several parallel wires. The dynamics of this system are given directly from the coherence maps in Equation~\ref{dia:coherence for P}.


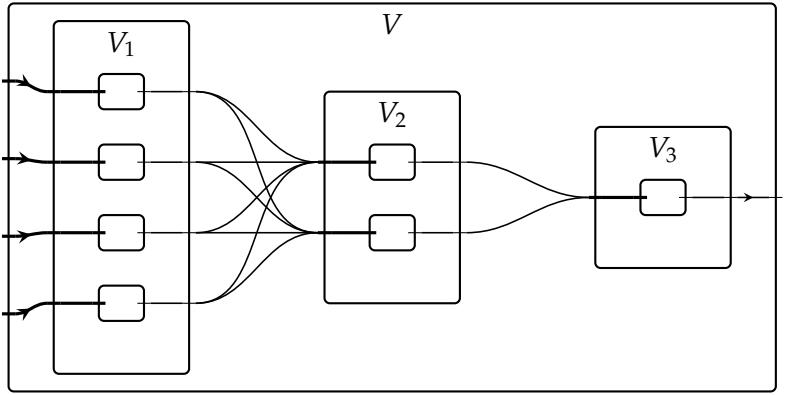
\begin{figure}[h!]
\centering
\[
	\begin{tikzpicture}[wiring diagram,bb port sep=1, bb port length=2.5pt, bbx=.6cm, bb min width=.6cm, bby=1.3ex]
		\node[bbthick={1}{1}{very thick}{thin}, bb name=$$] (V11) {};
		\node[bbthick={1}{1}{very thick}{thin}, below= 2 of V11, bb name=$$] (V12) {};
		\node[bbthick={1}{1}{very thick}{thin}, below= 2 of V12, bb name=$$] (V13) {};
		\node[bbthick={1}{1}{very thick}{thin}, below= 2 of V13, bb name=$$] (V14) {};
		\node[bbthick={4}{4}{very thick}{thin}, fit={(V11) (V12) (V13) (V14) ($(V11.north)+(0,2)$) ($(V14.south)+(0,-2)$)}, bb name=${V_1}$] (V1) {};

		\node[bbthick={1}{1}{very thick}{thin}, right= 5 of V12, bb name=$$] (V21) {};
		\node[bbthick={1}{1}{very thick}{thin}, below= 2 of V21, bb name=$$] (V22) {};
		\node[bbthick={2}{2}{very thick}{thin}, fit={(V21) (V22) ($(V21.north)+(0,2)$) ($(V22.south)+(0,-2)$)}, bb name=${V_2}$] (V2) {};
		\node[bbthick={1}{1}{very thick}{thin}, above right= 0 and 5 of V22, bb name=$$] (V31) {};
		\node[bbthick={1}{1}{very thick}{thin}, fit={(V31) ($(V31.north)+(0,2)$) ($(V31.south)+(0,-2)$)}, bb name=${V_3}$] (V3) {};
		\node[bbthick={4}{1}{very thick}{thin}, fit={(V1) (V2) (V3)}, bb name=${V}$] (V) {};
		\draw[ar, very thick] (V_in1') to (V1_in1);
		\draw[ar, very thick] (V_in2') to (V1_in2);
		\draw[ar, very thick] (V_in3') to (V1_in3);
		\draw[ar, very thick] (V_in4') to (V1_in4);		
		\draw (V1_out1) to (V2_in1);
		\draw (V1_out1) to (V2_in2);
		\draw (V1_out2) to (V2_in1);
		\draw (V1_out2) to (V2_in2);
		\draw (V1_out3) to (V2_in1);
		\draw (V1_out3) to (V2_in2);
		\draw (V1_out4) to (V2_in1);
		\draw (V1_out4) to (V2_in2);
		\draw (V2_out1) to (V3_in1);	
		\draw (V2_out2) to (V3_in1);	
		\draw[ar] (V3_out1) to (V_out1');
		\draw[very thick] (V1_in1') to (V11_in1);
		\draw[very thick] (V1_in2') to (V12_in1);
		\draw[very thick] (V1_in3') to (V13_in1);	
		\draw[very thick] (V1_in4') to (V14_in1);
		\draw (V11_out1) to (V1_out1');
		\draw (V12_out1) to (V1_out2');
		\draw (V13_out1) to (V1_out3');
		\draw (V14_out1) to (V1_out4');
		\draw[very thick] (V2_in1') to (V21_in1);
		\draw[very thick] (V2_in2') to (V22_in1);
		\draw (V21_out1) to (V2_out1');
		\draw (V22_out1) to (V2_out2');
		\draw[very thick] (V3_in1') to (V31_in1);
		\draw (V31_out1) to (V3_out1');

	\end{tikzpicture}
\]
\caption{A feedforward network composed of three layers, written as a mode-dependent network $\text{V} = (\epsilon, \sigma_!) : (V_1, V_2, V_3) \to V$ in $\cat{O}_\Cat{MDN}$. \emph{Note that the diagram pictured above is technically not a legal wiring diagram}: typically multiple inputs cannot map into the same port, but we have used a shorthand here to represent the many input ports to each layer.}
\label{fig:cortex}
\end{figure}
It is now possible to take the tensor product of layers to form wider, parallel layers, or to wire multiple layers together to form a multilayer neural network as in Figure~\ref{fig:cortex}. This is done by defining a morphism $\text{V} = (\epsilon, \sigma_!) : (V_1, V_2, V_3) \to V$ from each box $V_i$ to a wrapper $V$, and the process may be continued ad infinitum.

In principle it should not be hard to prove that there is an additional operad of feedforward networks, denoted $\cat{O}_\Cat{FFN}$, as a suboperad of $\cat{O}_\Cat{MDN}$, with objects tensor products of neurons and morphisms feedforward networks. $\cat{O}_\Cat{FFN}$ constitutes a domain-specific language for writing neural networks, one that allows them to be easily composed in serial, in parallel, or by insertion.

\end{example}

\begin{example}[Plug and play]\label{example:cortex}
As promised, we can now ``plug in'' Example~\ref{ex:products}, Example~\ref{ex:behavior}, and Example~\ref{ex:nnet} to form an extended, mode-dependent model of the visual system, $\text{VS} = (\epsilon, \sigma) : (B, V) \to VS$. Essentially: light comes in through the eyes and activates the retinal nerve cells, whose signals are (1) used to define a blink reflex through the pons and (2) fed to a slice of the visual cortex, where the information is interpreted, analyzed, and fed as a $\{0,1\}$ output to some other region of the brain.


\begin{figure}[h!]
\centering
	\begin{tikzpicture}[wiring diagram,bb port sep=1, bb port length=2.5pt, bbx=.6cm, bb min width=.6cm, bby=1.3ex]
		\node[bbthick={2}{1}{thin}{very thick}, bb name=${E_1}$] (E1) {};
		\node[bbthick={2}{1}{thin}{very thick}, below= 3 of E1, bb name=${E_2}$] (E2) {};
		\node[bbthick={2}{2}{very thick}{thin}, below right = 0 and 2 of E1, bb name=${P}$] (P) {};
		\node[bbthick={2}{1}{very thick}{thin}, right= 4 of P, bb name=$V$] (V) {};
		\node[bb={2}{1}, fit={($(E2.south west)+(0,-2)$) (P) (V) ($(E1.north)+(0,2)$)},bb name =$VS$] (VS) {};
		\draw[ar, very thick] (E1_out1) to (P_in1);
		\draw[ar, very thick] (E2_out1) to (P_in2);
		\draw[ar] let \p1=(P.north east), \p2=(E1.north west), \n1={\y1+\bby}, \n2=\bbportlen in (P_out1) to[in=0] (\x1,\y2+\n2) -- (\x2-\n2,\y2+\n2) to[out=180] (E1_in1);
		\draw[ar] let \p1=(P.south east), \p2=(E2.south west), \n1={\y1+\bby}, \n2=\bbportlen in (P_out2) to[in=0] (\x1,\y2-\n2) -- (\x2-\n2,\y2-\n2) to[out=180] (E2_in2);
		\draw[ar] (VS_in1') to (E1_in2);
		\draw[ar] (VS_in2') to (E2_in1);
		\draw[ar, very thick] (E1_out1) to (V_in1);
		\draw[ar, very thick] (E2_out1) to (V_in2);
		\draw[ar] (V_out1) to (VS_out1');
		\draw[label] node[right=1.2pt of VS_out1]     {$\outp{VS}_a : \{0,1\}$};
		\draw[label] node[left=1.2pt of VS_in1]     {$\inp{VS}_a : [0,1]$};
		\draw[label] node[left=1.2pt of VS_in2]     {$\inp{VS}_b : [0,1]$};
	\end{tikzpicture}
\caption{A wiring diagram of the visual pathway.}
\label{fig:last}
\end{figure}
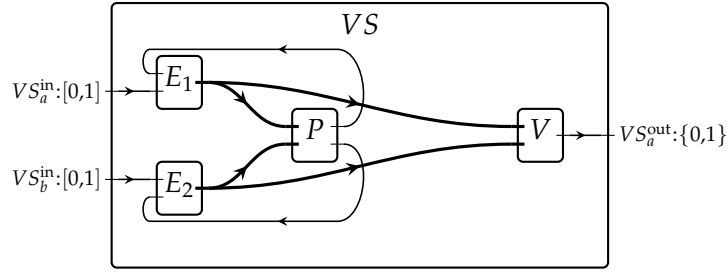

The entire system is reducible to the retinal nerves and cortical neurons. Recall the mode-dependent networks defined up to now: 
\begin{align*}
\text{Nerve} &: (M_N, N) \to R \\
\text{Eye} &: (R_1, ..., R_n) \to E \\
\text{Blink} &: (E_1, E_2, P) \to B \\
\text{Neuron} &: ((M_N, N), S) \to R \\
\text{V}_i &: (R_1, ..., R_n) \to V_i\\
\text{V} &: (V_1, V_2, V_3) \to V \\
\text{VS} &: (B, V) \to VS
\end{align*}
Then the previous statement, that the system is reducible to nerves and neurons, is just the statement of the composition: \[\text{VS} \circ (\text{V}, \text{Blink}) \circ (\text{Eye}, \text{Eye}, \text{V}_1, \text{V}_2, \text{V}_3) \circ (\text{Nerve}, ..., \text{Nerve}, \text{Neuron}, ..., \text{Neuron}).\]

\end{example}

\chapter*{Acknowledgments}

Thanks go to Eugene Lerman for many useful and interesting discussions, and for pointing out the work of Mike Field on asynchronous networks \cite{field}, which made this paper possible. We also appreciate many wonderful conversations with Dylan Rupel, who had the insight that internal states could be different than communicative modes. 

\end{document}